\title{\LARGE \bf
Discrete-Time Linear-Quadratic Regulation via Optimal Transport
}
\newtheorem{lemma}{Lemma}
\newtheorem{theorem}{Theorem}
\newtheorem{remark}{Remark}
\newcommand{\bmpi}{\boldsymbol{\pi}}
\newcommand{\bmrho}{\boldsymbol{\rho}}
\newcommand{\bmtau}{\boldsymbol{\tau}}
\author{Mathias Hudoba de Badyn, Erik Miehling, Dylan Janak, Beh\c{c}et A\c{c}{\i}kme\c{s}e, \\Mehran Mesbahi, Tamer Ba\c{s}ar, John Lygeros, Roy S.~Smith
\thanks{This  work  was  supported  by the  ETH  Foundation, and the SNSF under NCCR Automation. 
MHdB performed part of this work during a stay at the Erwin Schr\"{o}dinger Institute Optimal Transport workshop in May 2019. 
EM and TB are funded in part by US Army Research Laboratory Cooperative Agreement W911NF-17-2-0196, and in part by the Air Force Office of Scientific Research (AFOSR) Grant FA9550-19-1-0353.
MM is  funded in part by the Air Force Office of Scientific Research grant FA9550-16-1-0022.}
\thanks{MHdB, JL, \& RSS are with the Automatic Control Laboratory at ETH Z\"{u}rich, Switzerland.
        {\tt\small \{mbadyn, jlygeros, rsmith \}@control.ee.ethz.ch}}%
\thanks{EM \& TB are with the Coordinated Science Laboratory at the University of Illinois at Urbana-Champaign, Urbana, USA.
        {\tt\small \{miehling, basar1\}@illinois.edu}}%
\thanks{DJ, MM \& BA are with the William E.~Boeing Department of Aeronautics and Astronautics at the University of Washington, Seattle, USA.
        {\tt\small \{dj137, mesbahi, behcet\}@uw.edu}}%
}
\begin{document}

\maketitle
\thispagestyle{empty}
\pagestyle{empty}

\begin{abstract}
In this paper, we consider a discrete-time stochastic control problem with uncertain initial and target states.
We first discuss the connection between optimal transport and stochastic control problems of this form. Next, we formulate a linear-quadratic regulator problem where the initial and terminal states are distributed according to specified probability densities. A closed-form solution for the optimal transport map in the case of linear-time varying systems is derived, along with an algorithm for computing the optimal map. Two numerical examples pertaining to swarm deployment demonstrate the practical applicability of the model, and performance of the numerical method.
\end{abstract}

\section{Introduction}

The problem of steering the states of a linear system from an initial distribution to a terminal distribution has attracted much interest in recent years~\cite{Chen2015a,Chen2015,Chen2018c}.
Applications of such controllers include the density control of swarms~\cite{Eren2017,nazlipaper} and networked dynamical systems~\cite{Hudobadebadyn2018}, as well as opinion dynamics~\cite{Albi2016}.

Optimal mass transport is a mathematical framework for deriving mass-preserving maps between specified distributions that minimize a cost of transport.
The optimal transport cost, in some specific contexts called the \emph{Wasserstein metric}, provides a useful metric on the space of probability distributions.
This has been employed in a wide variety of fields, such as economics~\cite{galichon2018optimal}, machine learning~\cite{Frogner2015,Rolet2016}, computer vision~\cite{peyrechap}, and image processing~\cite{Rabin2011}.
The Wasserstein metric also allows one to tractably compute worst-case distributions in optimization problems~\cite{paymandist}, which have been applied in areas such as state estimation~\cite{ShafieezadehAbadeh2018}, and machine learning~\cite{kuhndata}.
The computation of the Wasserstein metric and corresponding transport map has also attracted much attention, in particular techniques allowing for computational speedup such as entropic regularization and Sinkhorn scaling~\cite{Cuturi2013,Peyre2015}.

The connection of optimal transport to continous-time control began with the seminal reformulation of optimal transport as a PDE-based fluid dynamics optimization problem~\cite{Benamou2000}.
In this approach, a velocity field is computed that minimizes the average kinetic energy of a fluid moving from one density to another.
Equivalently, this approach can be thought of as a single-integrator particle moving from an initial state with uncertainty described by an initial distribution, to a final state with an uncertainty described by a final distribution.
The cases of general linear time-varying (LTV) systems, and general LTV systems driven by noise (so-called \emph{Schr\"{o}dinger bridges}) were developed by~\cite{Chen2017}.

The latter paper~\cite{Chen2017} employs a Lagrangian-based cost function, where the static quadratic cost is replaced with a time-varying cost with dynamical constraints.
Such techniques were developed in~\cite{Agrachev2009}, which dealt with optimal transport with nonholonomic constraints.
In a similar problem configuration, the existence and uniqueness of transport maps were determined for linear--quadratic costs by \cite{Hindawi2011}. 
Other works include distributed optimal transport for swarms of single-integrators~\cite{KrishnanArxiv,Krishnan2018}, 
Perron-Frobenius operator methods for computing optimal transport over nonlinear systems~\cite{bermanpaper}, and a related problem regarding the steering of an LTV systems to a terminal state with specified expected value and covariance~\cite{panospaper,BakolasCDC,bakolas2018finite}.

While much attention has been paid to optimal transport of dynamical systems in continuous-time, there has been a marked lack of works discussing the implementation of such controllers in discrete time, which is a gap in the literature that needs to be addressed before optimal transport techniques can be implemented on digital controllers.
One contribution of this paper is to provide a rigorous analysis of the optimal transport problem for linear-quadratic regulation of LTV systems in discrete time.

In the present work, we discuss the theory and implementation of optimal transport for discrete-time linear-quadratic regulation for LTV systems.
Our contributions are as follows.
We formalize a previously-developed method of applying optimal transport methods to control by converting a class of optimal control problems to an optimal transport problem where the cost function is the optimal cost-to-go from an initial state to a terminal state.
This formalism is then applied to derive the closed-form solution of the discrete-time LQR problem with state-density constraints.
This problem is solved numerically, and the solution is then implemented on an example involving swarm deployment.

The paper is organized as follows.
We outline the notation and preliminaries on optimal transport in \S\ref{sec:math-prel}.
Our problem statement is outlined in \S\ref{sec:problem-statement}, where we discuss formulating optimal transport problems for control systems in terms of value functions.
Our results concerning optimal transport for LQR and its numerical computation are in \S\ref{sec:results}.
We present numerical examples and an application to swarm deployment in \S\ref{sec:examples}, and conclude the paper in \S\ref{sec:conclusion}.

\section{Mathematical Preliminaries}
\label{sec:math-prel}

In this section, we outline the notation used in the paper, as well as the necessary preliminaries on optimal transport.

\subsection{Notation}

The $n$-dimensional space of real numbers, non-negative real numbers, and positive real numbers are respectively denoted by $\mathbb{R}^n,\mathbb{R}^n_+$, and $\mathbb{R}^n_{++}$. 
We denote vectors in lower-case $x,y,z\in\mathbb{R}^n$, and matrices in capital-case $A,B,C\in\mathbb{R}^{n\times m}$.
Inequalities are interpreted component-wise.
Symmetric positive-definite and positive semi-definite cones of matrices are respectively denoted as $\mathcal{S}_{++}^n$ and $\mathcal{S}_{+}^n$.
For $Q \in \mathcal{S}_{+}^n$, we let $x^TQx = \|x\|_Q^2$.
The $n\times n$ identity matrix is denoted by $I_n$, or just $I$  if comformable dimensions are assumed.
$\mathbf{1}_n$ denotes the length-$n$ vector of all ones, and $\mathbf{0}$ denotes a matrix of zeros.
The identity map is denoted by $\mathrm{Id}(x) = x$.
The direct sum of $n$ $m\times m$ square matrices matrices $\{A_i\}_{i=1}^n$ is the $nm\times nm$ matrix formed by placing $A_1,\dots,A_n$ on the block diagonal. 
It is denoted by $\bigoplus_{i=1}^n A_i$.
The vectorization operation $U=\mathrm{vec}(\{u_k\}_{k=1}^n)$ denotes the vector $U \in\mathbb{R}^{nm}$ consisting of the stacked vectors $u_k\in\mathbb{R}^m$.

A measure space is a triple $(\mathcal{X},\mathcal{A},\mu)$ where $\mathcal{X}$ is a set, $\mathcal{A}$ is a $\sigma$-algebra on $\mathcal{X}$, and $\mu$ is measure on $(\mathcal{X},\mathcal{A})$.
We write a \emph{probability space} as $(\mathcal{X},\mu)$, where $\mu(\mathcal{X}) =1$ is a non-negative measure, and  we assume that $(\mathcal{X},\mu)$ is equipped with the Borel $\sigma$-algebra. 

For probability spaces $(\mathcal{X},\mu_0)$, $(\mathcal{Y},\mu_1)$, the \emph{pushforward map}, denoted by $\mu_1 := T_\# \mu_0$, is defined by the relation
\begin{align}
   \mu_1(B) = \mu_0(T^{-1}(B))
\end{align}
for each $B\in \mathcal{A}(Y)$.
If a random variable $x$ is distributed according to a probability density function $\rho(x)$, then we write $x\sim\rho(x)dx$.

\subsection{Optimal Transport}
In this section, we summarize four seminal forms of the optimal transport problem, and then specify the form of the optimal transport for our present work. 
One may consult excellent texts by Villani for a more in-depth discussion of the theory~\cite{Villani2009,Villani2003}.

Consider two probability spaces $(\mathcal{X},\mu_0)$ and  $(\mathcal{Y},\mu_1)$.
A \emph{transport map} $T:\mathcal{X} \rightarrow \mathcal{Y}$ is said to \emph{transport $\mu_0$ to $\mu_1$} if $T_\# \mu_0 = \mu_1$.
The Monge optimal transport problem seeks to find an optimal map $T$ that minimizes some cost of transport $c(x,T(x))$,
\begin{align}
    \begin{array}{ll}
      \inf_T & \int_{\mathcal{X}} c(x,T(x)) d\mu_0(x)\\
      \mathrm{s.t.} & T_\# \mu_0 = \mu_1.
    \end{array}\label{eq:8}\tag{OT1}
\end{align}
In general, if one of the measures $\mu_0,\mu_1$ has infinite second moment, then the cost of~\eqref{eq:8} may be infinite.
Furthermore, the pushforward constraint of \eqref{eq:8} makes this problem computationally intractable.
Kantorovich formulated a relaxation of \eqref{eq:8} that obtains the same minimizer under quadratic costs\footnote{The Kantorovich and Monge problems have corresponding minimizers under more general choices of $c(x,y)$, but we only consider the quadratic cost $c(x,y)= x^TQ_xx + y^TQ_yy + 2x^TQ_{xy}y$ in this paper.}, i.e., $c(x,y) = \frac{1}{2}\|x-y\|_2^2$.
The problem considers the set of joint probability distributions $\pi(x,y)$ on $\mathcal{X}\times\mathcal{Y}$ whose marginals are the initial and target measures,
\begin{align}
    \pi(A,\mathcal{Y}) = \mu_0(A),~\pi(\mathcal{X},B)= \mu_1(B),
\end{align}
for all Borel sets $A\subseteq\mathcal{X}$ and $B\subseteq\mathcal{Y}$.
With some abuse of notation, to make variables of operators (e.g., optimization, integration) we may write the above as
\begin{align}
    \pi(x,\cdot) = \mu_0(x),~\pi(\cdot,y) = \mu_1(y).
\end{align}
The Kantorovich optimal transport is then given by,
\begin{align}
    \begin{array}{ll}
      \inf_\pi & \int_{\mathcal{X}\times\mathcal{Y}} c(x,y) d\pi(x,y)\\
      \mathrm{s.t.} & \pi(x,\cdot) = \mu_0(x),~\pi(\cdot,y) = \mu_1(y).
    \end{array}\label{eq:9}\tag{OT2}
\end{align}
For the case of quadratic costs, \eqref{eq:9} obtains the same minimum as \eqref{eq:8}, and the optimal coupling satisfies $\pi^* = (\mathrm{Id}\times T^*)_{\#}\mu_0$, where $T^*(x)$ is the optimal map from \eqref{eq:8}.

The dual of \eqref{eq:9} has an explicit interpretation in economic theory of transport pricing~\cite{galichon2018optimal}, but perhaps more importantly, it offers insight into the structure of the optimal map $T$ in the case of quadratic costs.
For $\phi,\psi$ in the dual space of probability measures, the dual is given by,
\begin{align}
    \begin{array}{ll}
      \sup_{\phi,\psi} & \int_{\mathcal{X}} \phi(x)d\mu_0(x) - \int_{\mathcal{Y}} \psi(y) d\mu_1(y)\\
      \mathrm{s.t.} & \phi(x) - \psi(y) \leq c(x,y),~\forall (x,y)\in \mathcal{X}\times \mathcal{Y}.~~~~~
    \end{array}\label{eq:10}\tag{OT3}
\end{align}
When $c(x,y)= \frac{1}{2}\|x-y\|_2^2$, the optimal map $T^*(x)$ of \eqref{eq:8} can be written in terms of $\psi^*$ from \eqref{eq:10} as~\cite{Villani2003},
\begin{align}
    T^*(x) = \nabla\left(\frac{1}{2} x^Tx + \psi^*(x) \right),
\end{align}
and in particular it can be shown that $(\frac{1}{2} x^Tx + \psi^*(x))$ is a convex function~\cite{brenierfactorization}.
Note that in our notation, $\psi^*(x)$ refers to the \emph{optimal}  $\psi$, and not its Fenchel conjugate.

One final formulation of optimal transport we describe here is given by Brenier and Benamou in the form of an optimal control problem in a fluid dynamics setting.
Given initial and terminal densities $\rho_0,$ $\rho_1$, one seeks to find a smooth, time-dependent velocity field $v(x,t)$ taking $\rho_0$ to $\rho_1$ in unit time, while satisfying the continuity equation.
The velocity field minimizes the average kinetic energy of the fluid.
The problem is explicitly defined as~\cite{Benamou2000},
\begin{align}
    \begin{array}{ll}
      \sup_{\rho,v} & \int_0^1\int_{\mathbb{R}^n} \|v(x,t)\|_2^2\rho(x,t)dxdt\\
      \mathrm{s.t.} & \partial_t \rho(x,t) + \nabla \cdot(\rho(x,t)v(x,t)) = 0\\
                    & \rho(x,0) = \rho_0(x),~\rho(x,1) = \rho_1(x).
    \end{array}\label{eq:11}\tag{OT4}
\end{align}
In Lagrangian coordinates $X(x,t)$ with $X(x,0):=x$, $\partial_t X(x,t) = v(X(x,t),t)$, the solution to \eqref{eq:11} is given by a linear interpolation with the optimal map,
\begin{align}
    X(x,t) = x + t(T(x) - x)=: T_t(x),
\end{align}
and so the densities at time $t$ satisfy
\begin{align}
    \rho(x,t) := \rho_t(x) = (T_t)_{\#} \rho_0(x).
\end{align}

\section{Stochastic Optimal Control with State-Density Constraints}
\label{sec:problem-statement}

In this section, we consider an optimal transport approach for the discrete-time linear-quadratic regulator.
We present a formal discretization of the continuous-time controllers presented in \cite{Chen2017}, and extend this to the more general framework of LQR control.

We consider systems with a state $z_k\in\mathbb{R}^n$ of the form
\begin{equation}
    \begin{aligned}
      z_{k+1} &= A_kz_k + B_ku_k\\
      z_0 &\sim \rho_0(z)dz,
    \end{aligned} \label{eq:12}
\end{equation}
where the initial condition $z_0$ has some uncertainty described by a probability density $\rho_0(x)$ and $u_k\in \mathbb{R}^m$ is the control.
Our goal is to translate the system \eqref{eq:12} to a terminal state $z_{t_f}\sim \rho_1$ over a time horizon $0 \leq k \leq t_f$, where $\rho_1$ captures some desired uncertainty in the terminal state\footnote{As a technical assumption, we let $t_f \geq n$. This is not a constricting assumption, because OT problems do not in general scale well with $n$, and so we expect that in a real-world setting the OT methods in this paper would be applied to a reduced-order model (and hence small $n$) to compute references that would be tracked by a local, higher-fidelity controller.}.
The control should satisfy some optimality principle under an appropriate cost, and so an optimization problem with dynamics \eqref{eq:12} is,
\begin{align}
    \begin{array}{ll}
        \min_{u,\pi} & \mathbb{E}_{\pi}\left[ \sum_{k=0}^{t_f-1} c(z_k,u_k)\right]\\
      \mathrm{s.t.} & z_{k+1} = A_kz_k + B_ku_k\\
                 & z_0 \sim \rho_0(z)dz,~z_{t_f} \sim \rho_1(z)dz,
    \end{array}\label{eq:14}
\end{align}
where the expectation is with respect to a joint distribution $\pi(z_0,z_{t_f})$, as defined in \eqref{eq:9}.
The remark below formalizes a solution technique for problems of the form~\eqref{eq:14} which was used by \cite{Chen2017} to solve continuous-time optimal control problems with control costs.

\begin{remark}
\label{rem:1}
A general method to solve problems of the form \eqref{eq:14} is to first solve the deterministic problem
\begin{align}
   \left. \begin{array}{ll}
        \min_{u} &  \sum_{k=0}^{t_f-1} c(z_k,u_k)\\
      \mathrm{s.t.} & z_{k+1} = A_kz_k + B_ku_k\\
                 & z_0 = x,~z_{t_f} = y
    \end{array}\right\} = C(x,y)\label{eq:15},
\end{align}
to determine a formula $C(x,y)$ for the optimal cost-to-go from $x$ to $y$. 
Thus, \eqref{eq:14} can be re-written as
\begin{align}
  \begin{array}{ll}
        \min_{\pi} & \int_{\mathbb{R}^n\times\mathbb{R}^n} C(x,y) d\pi(x,y)\\
    \mathrm{s.t.} & \pi(x,\cdot) = \rho_0(x)dx,~\pi(\cdot,y) = \rho_1(y)dy,
  \end{array}\label{eq:17}
\end{align}
where the marginal constraints on $\pi$ encode the relevant distributions on the initial state $x$ and terminal state $y$.
Problem \eqref{eq:17} is clearly a Kantorovich optimal transport problem of the form \eqref{eq:9}, where the cost function is now the deterministic value function encoding the cost-to-go from $x$ to $y$, and the optimal coupling $\pi^*$ encodes a mapping between initial and terminal conditions $x$ and $y$.

The solution to \eqref{eq:17}, under appropriate assumptions on the cost $C(x,y)$, yields a coupling of the form
\begin{align}
    \pi^*(x,y) = (\mathrm{Id}\times T^*)_{\#} \mu_0(x),
\end{align}
where $y = T^*(x)$.
Finally, we note that  $\{u^*_k(x,T(x))\}_{k=1}^{t_f-1}$ solves \eqref{eq:14}.
\hfill\(\triangle\)
\end{remark}

\vspace{0.5em}
When $c(z_k,u_k) = (z_k - y)^TQ_k(z_k - y) + u_k^TR_k u_k$ for $y\sim \rho_1(z)dz$, we have the following LQR problem with stochastic initial and terminal constraints,
\begin{align}
    \begin{array}{ll}
        \min_{u} & \mathbb{E}\left[ \sum_{k=0}^{t_f-1} \left\{ \|z_k - y\|_{Q_k}^2 + \|u_k\|_{R_k}^2\right\}\right]\\
      \mathrm{s.t.} & z_{k+1} = A_kz_k + B_ku_k\\
                 & z_0 \sim \rho_0(z)dz,~z_{t_f} = y \sim \rho_1(z)dz.
    \end{array}\label{eq:16}
\end{align}
We solve this problem in the following section.

\section{Derivation of the Optimal Map}
\label{sec:results}

In this section, we outline the solutions to Problem~\eqref{eq:16}, beginning with the simplified case of a cost on the control only. 
Our main contribution in this section is the more-general LQR problem, outlined in~\ref{sec:discr-time-optim}.

\subsection{Discrete-Time Optimal Transport -- Control Cost Case}

Consider the task of transporting over a time horizon of $0\leq k \leq t_f$ a linear time-varying system with uncertain initial state $z_0$ characterized by  $\rho_0(z)$, to a final state $z_{t_f}$ with an uncertainty characterized by $\rho_1(z)$, with minimal control cost.
We assume that the dynamics $z_{k+1} = A_kz_k + B_k u_k$ are controllable over the interval $0\leq k \leq t_f$.
The problem is posed as
\begin{align}
  \begin{array}{ll}
    \min & \mathbb{E} \left[ \sum_{k=0}^{t_f-1} \|u(k)\|_2^2 \right] \\
    \mathrm{s.t.} & z_{k+1} = A_k z_k + B_k u_k\\
         & z_0 \sim \rho_0(z) dz,~z_{t_f} \sim \rho_1(z)dz.
  \end{array} \tag{P1} \label{eq:2}
\end{align}
Following a similar derivation as the continuous-time case studied in~\cite{Chen2017}, one can consider first solving the deterministic problem,
\begin{align}
  \begin{array}{ll}
    \min &  \sum_{k=0}^{t_f-1} \|u(k)\|_2^2  \\
    \mathrm{s.t.} & z_{k+1} = A_k z_k + B_k u_k\\
         & z_0= x,~z_{t_f} = y.
  \end{array} \tag{P2} \label{eq:4}
\end{align}
The solution to \eqref{eq:4} is given in closed form as 
\begin{align}
 \begin{split}
  C^*(x,y)&= \left( y- \Phi(t_f,0) x\right)^T\\
   &\hspace{2em}\cdot W_c(t_f,0)^{-1} \left( y- \Phi(t_f,0) x\right),
  \end{split}\label{eq:5}\\
  u^*(k) &= B_k^T \Phi(t_f,k+1)^T  W_c(t_f,0)^{-1}  \left(y - \Phi(t_f,0) x\right),
\end{align}
where
\begin{align}
  &\Phi(t_f,k) = A_{t_f-1}A_{t_f-2}\cdots A_k,
\end{align}
and
\begin{align}
  &W_c(t_f,0) = \sum_{k=0}^{t_f-1} \Phi(t_f,k+1)B_kB_k^T \Phi(t_f,k+1)^T.\label{eq:28}
\end{align}
Substituting the optimal cost \eqref{eq:5} into \eqref{eq:2} and applying the coordinate transformation
\begin{align}
  \begin{bmatrix}
    x \\ y 
  \end{bmatrix} \mapsto
  \begin{bmatrix}
    W_c(t_f,0) ^{-1/2} \Phi(t_f,0) x\\ 
    W_c(t_f,0)^{-1/2} y
  \end{bmatrix} \triangleq
  \begin{bmatrix}
    \hat{x} \\ \hat{y}
  \end{bmatrix},\label{eq:30}
\end{align}
transforms~\eqref{eq:5} into $\|\hat{x} - \hat{y}\|_2^2$, and so we arrive at the Kantorovich optimal transport problem
\begin{align}
  \begin{array}{ll}
    \min_{\hat{\pi}} & \int_{\mathbb{R}^n\times\mathbb{R}^n} \| \hat{x} - \hat{y} \|_2^2 d\hat{\pi}(\hat{x},\hat{y}) \\
    \mathrm{s.t.} & \hat{\pi}(\hat{x},\cdot) = \hat{\rho}_0(\hat{x})d\hat{x},~ \hat{\pi}(\cdot,\hat{y}) = \hat{\rho}_1(\hat{y})d\hat{y},
  \end{array} \tag{P3} \label{eq:6} 
\end{align}
with the distributions in the $(\hat{x},\hat{y})$ coordinates given by
\begin{align}
  &\resizebox{\columnwidth}{!}{$
  \hat{\rho}_0 (\hat{x}) = |W_c(t_f,0)|^{1/2} | \Phi(t_f,0) |^{-1} \rho_0\left(\Phi(t_f,0)^{-1} W_c(t_f,0)^{1/2} \hat{x}\right)$}\\
  &\hat{\rho}_1 (\hat{y}) = |W_c(t_f,0)|^{1/2}  \rho_0\left( W_c(t_f,0)^{1/2} \hat{y}\right).
\end{align}
Now, suppose that $\hat{T}$ is the Monge map that corresponds to the solution of \eqref{eq:6}.
Then, by using~\eqref{eq:30}, the solution to the original problem \eqref{eq:2} can be written in terms of its Monge map
\begin{align}
  y = T(x) = W_c(t_f,0)^{1/2} \hat{T} \left( W_c(t_f,0)^{-1/2} \Phi(t_f,0) x\right).
\end{align}
The  
optimal controls are thus given by,
\begin{equation}
  \resizebox{\columnwidth}{!}{
$u(k,x(k)) = B^T \Phi(t_f,k+1)^T W_c(t_f,0)^{-1} \left[ T(x) - \Phi(t_f,0) x \right]$.
}
\end{equation}

\subsection{Discrete-Time Optimal Transport --  Linear-Quadratic Case}
\label{sec:discr-time-optim}

In this subsection, we consider the more general case of a linear-quadratic cost function.
The problem is formulated as
\begin{align}
    \begin{array}{ll}
      \min &\mathbb{E}\left[ \sum_{k=0}^{t_f-1} \left\{\|u_k\|_{R_k}^2 + \|z_k-y\|_{Q_k}\right\}^2\right]\\
      \mathrm{s.t.} & z_{k+1} = A_kz_k + B_k u_k\\
               & z_0 \sim \rho_0(z)dz,~z_{t_f} = y \sim \rho_1(z)dz.\tag{P3}\label{eq:24}
    \end{array}
\end{align}

We proceed using the methodology in Remark~\ref{rem:1} by considering the solution to the deterministic problem,
\begin{align}
    \begin{array}{ll}
      \min & \sum_{k=0}^{t_f-1} \left\{\|u_k\|_{R_k}^2 + \|z_k-y\|_{Q_k}^2\right\}\\
      \mathrm{s.t.} & z_{k+1} = A_kz_k + B_k u_k\\
               & z_0 =x,~z_{t_f} =y.
    \end{array}\tag{P4}\label{eq:7}
\end{align}
We summarize the cost-to-go of \eqref{eq:7} in the following lemma.
Note that we utilize a pseudoinverse, present in \eqref{eq:3}.
While at first glance it may seem that this pseudoinverse severely limits the applicability of this lemma, this is not the case.
We discuss in Remark~\ref{rem:pseudo} (after the proof of the lemma) why the pseudoinverse is well-defined for all controllable LTV systems, and we highlight an example in \S\ref{sec:examples} that shows that the pseudoinverse is indeed well-behaved, even in pathological cases.
\vspace{0.5em}
\begin{lemma}
  \label{lem:1}
  The optimal cost-to-go of \eqref{eq:7} is quadratic in $x,y$, in that
  \begin{align}
      C^*(x,y) = x^TQ_x x + y^TQ_y y + 2 x^T Q_{xy} y,\label{eq:23}
  \end{align}
  where $Q_{xy}$ is invertible.
  The optimal control of~\eqref{eq:7} is given by $U^*:=\mathrm{vec}(\{u_k^*\}_{k=0}^{t_f-1})$,
  \begin{align}
    \begin{split}
      U^*&= K^*(y - \Phi(t_f,0)x)\label{eq:22}\\& - \Gamma_{U_1}P^{-1}A_{U_1}^T\tilde{Q}\left(\Omega x - (\mathbf{1}_{t_f}\otimes I_n)y\right),
      \end{split}\\
K^* &= \left(I - \Gamma_{U_1} P^{-1} \Gamma_{U_1}^T\tilde{R} - \Gamma_{U_1} P^{-1} A_{U_1}^T \tilde{Q}\Psi\right) \Gamma_y,
  \end{align}
  where the matrices comprising the optimal cost and control are given by
  \begin{align}
    Q_x &= K_1^T \tilde{Q} K_1 + K_3^T \tilde{R} K_3\\
    Q_y &= K_2^T \tilde{Q} K_2 + K_4^T \tilde{R} K_4\\
    Q_{xy} &= K_1^T\tilde{Q}K_2 + K_3^T \tilde{R} K_4\\
    K_1&=  (I-A_{U_1}P^{-1}A_{U_1}^T\tilde{Q})A_x-A_{U_1}P^{-1}\Gamma_{U_1}^T\tilde{R}\Gamma_x \\
    K_2&= (I-A_{U_1}P^{-1}A_{U_1}^T\tilde{Q})A_y -A_{U_1}P^{-1}\Gamma_{U_1}^T\tilde{R}\Gamma_y\\
    K_3&= (I-\Gamma_{U_1}P^{-1}\Gamma_{U_1}^T\tilde{R}) \Gamma_x -\Gamma_{U_1}P^{-1}A_{U_1}^T\tilde{Q} A_x\\
    K_4&= (I-\Gamma_{U_1}P^{-1}\Gamma_{U_1}^T\tilde{R}) \Gamma_y -\Gamma_{U_1}P^{-1}A_{U_1}^T\tilde{Q} A_y\\
    P&= A_{U_1}^T \tilde{Q} A_{U_1} + \Gamma_{U_1}^T \tilde{R} \Gamma_{U_1}\\
    A_x &= \Omega + \Psi  \Gamma_x,~
    A_y = \Psi \Gamma_y - \mathbf{1}_{t_f} \otimes I_n,~
    A_{U_1} = \Psi \Gamma_{U_1}\\
    \Gamma_x &= \begin{bmatrix}
                    \mathbf{0}\\ -S_2^\dag \Phi(t_f,0)
                \end{bmatrix},~
    \Gamma_y = \begin{bmatrix}
                    \mathbf{0} \\ S_2^\dag
                \end{bmatrix},~
    \Gamma_{U_1}= \begin{bmatrix}
                       I_{(t_f-n)m}\\ - S_2^\dag S_1
                   \end{bmatrix},\\
    \tilde{Q} &= \bigoplus_{k=0}^{t_f-1}Q_k,~\tilde{R} = \bigoplus_{k=0}^{t_f-1}R_k,
  \end{align}
where $S_1$, and $S_2$ are
\begin{align}
    &S_1 =\\ 
&  \begin{bmatrix}
    \Phi(t_f,1) B_0 & \Phi(t_f,2)B_1 & \cdots & \Phi(t_f,t_f-n) B_{t_f-n-1}
  \end{bmatrix},\\
&  S_2 = 
   \begin{bmatrix}
     \Phi(t_f,t_f-n+1) B_{t_f-n} & \cdots & B_{t_f-1}
   \end{bmatrix},
\end{align}
and
the matrices defined by the dynamics are given by
\begin{align}
&  \Psi = 
  \begin{bmatrix}
    \bar{\Upsilon}(0) \\
    \bar{\Upsilon}(1) \\ 
    \vdots \\ 
    \bar{\Upsilon}(t_f -1)
  \end{bmatrix},~
  \Omega = 
  \begin{bmatrix}
    \Phi(1,0) \\ 
    \Phi(2,0)\\
    \vdots \\
    \Phi(t_f,0)
  \end{bmatrix}\\
&  \Upsilon (l,0)= 
  \begin{bmatrix}
    \Phi(l,1) B_0 & \Phi(l,2) B_1 & \cdots & B_{l-1}
  \end{bmatrix}\\
&  \bar{\Upsilon}(l) :=
  \begin{bmatrix}
    \Upsilon(l,0) & | & \mathbf{0} & \cdots & \mathbf{0}
  \end{bmatrix} \in \mathbb{R}^{n \times m t_f}.
\end{align}

\end{lemma}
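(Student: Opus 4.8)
The plan is to recast the fixed-endpoint problem \eqref{eq:7} as a single equality-constrained quadratic program in the stacked control $U = \mathrm{vec}(\{u_k\}_{k=0}^{t_f-1})$, eliminate the terminal constraint to obtain an unconstrained quadratic program in a reduced variable, solve it in closed form, and then collect the back-substituted optimum into the claimed quadratic form. First I would lift the dynamics: solving $z_{k+1}=A_kz_k+B_ku_k$ forward from $z_0=x$ writes each state as an affine function of $x$ and $U$, which when stacked reads $\mathbf{z}=\Omega x+\Psi U$, with $\Omega$ collecting the transition maps $\Phi(\cdot,0)$ and $\Psi$ collecting the convolution blocks $\bar{\Upsilon}(\cdot)$. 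The running cost becomes $J=\|U\|_{\tilde{R}}^2+\|\mathbf{z}-\mathbf{1}_{t_f}\otimes y\|_{\tilde{Q}}^2$ with $\tilde{Q}=\bigoplus_k Q_k$, $\tilde{R}=\bigoplus_k R_k$, a quadratic in $(U,x,y)$.

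Next I would handle the terminal constraint $z_{t_f}=y$. Reading the last state off the lifting, this is the single linear equation $[\,S_1 \mid S_2\,]\,U=y-\Phi(t_f,0)x$, where $[\,S_1\mid S_2\,]=\Upsilon(t_f,0)$ is the reachability map and the split isolates the final $nm$ controls in $S_2$. Partitioning $U=(U_1,U_2)$ accordingly and solving the last block with $S_2^\dag$ gives $U_2=S_2^\dag(y-\Phi(t_f,0)x-S_1U_1)$, i.e.\ the affine parametrization of the admissible controls $U=\Gamma_{U_1}U_1+\Gamma_x x+\Gamma_y y$ with $\Gamma_{U_1},\Gamma_x,\Gamma_y$ exactly as stated; this is where the pseudoinverse enters, and the validity of this step for controllable systems (where $S_2$ need not have full row rank) is precisely the content of Remark~\ref{rem:pseudo}. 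Substituting the parametrization into $\mathbf{z}$ and into $U$ produces the tracking error $\mathbf{z}-\mathbf{1}_{t_f}\otimes y=A_x x+A_y y+A_{U_1}U_1$ and the control $U=\Gamma_x x+\Gamma_y y+\Gamma_{U_1}U_1$, with $A_x=\Omega+\Psi\Gamma_x$, $A_y=\Psi\Gamma_y-\mathbf{1}_{t_f}\otimes I_n$, and $A_{U_1}=\Psi\Gamma_{U_1}$ as defined.

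With the constraint removed, $J$ is an unconstrained quadratic in the free variable $U_1$ with leading matrix $P=A_{U_1}^T\tilde{Q}A_{U_1}+\Gamma_{U_1}^T\tilde{R}\Gamma_{U_1}$. Since $\tilde{R}\succ0$ (each $R_k$ positive definite) and $\Gamma_{U_1}$ has full column rank because it contains the block $I_{(t_f-n)m}$, we get $\Gamma_{U_1}^T\tilde{R}\Gamma_{U_1}\succ0$ and hence $P\succ0$, so the minimizer is unique and given by $U_1^*=-P^{-1}\bigl(\Gamma_{U_1}^T\tilde{R}(\Gamma_x x+\Gamma_y y)+A_{U_1}^T\tilde{Q}(A_x x+A_y y)\bigr)$. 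Back-substituting into $U=\Gamma_x x+\Gamma_y y+\Gamma_{U_1}U_1^*$ and using the identity $\Gamma_x x+\Gamma_y y=\Gamma_y(y-\Phi(t_f,0)x)$ to isolate the $(y-\Phi(t_f,0)x)$ factor reproduces the stated $U^*$ and $K^*$. Grouping the optimal error and control by their $x$- and $y$-dependence yields $\mathbf{z}-\mathbf{1}_{t_f}\otimes y=K_1x+K_2y$ and $U^*=K_3x+K_4y$ with the listed $K_1,\dots,K_4$; inserting these into $J=\|\cdot\|_{\tilde{Q}}^2+\|\cdot\|_{\tilde{R}}^2$ and expanding gives $C^*(x,y)=x^TQ_xx+y^TQ_yy+2x^TQ_{xy}y$ with exactly $Q_x=K_1^T\tilde{Q}K_1+K_3^T\tilde{R}K_3$ and the analogous $Q_y,Q_{xy}$, as claimed.

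Everything above is routine, if lengthy, linear algebra; the one assertion requiring a genuine argument is that $Q_{xy}=K_1^T\tilde{Q}K_2+K_3^T\tilde{R}K_4$ is invertible. I would establish this by interpreting $Q_{xy}$ as (half) the mixed Hessian $\partial^2 C^*/\partial x\,\partial y$ of the value function and arguing non-degeneracy through controllability. The cleanest benchmark is the pure control-cost case $Q_k\equiv0$ of the previous subsection, where \eqref{eq:5} gives $C^*(x,y)=\|y-\Phi(t_f,0)x\|_{W_c(t_f,0)^{-1}}^2$ and hence $Q_{xy}=-\Phi(t_f,0)^TW_c(t_f,0)^{-1}$, manifestly invertible; the state penalty $\tilde{Q}\succeq0$ then perturbs this coupling. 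I expect this invertibility to be the main obstacle, since it does not follow from $P\succ0$ alone and is intertwined with the well-behavedness of $S_2^\dag$ deferred to Remark~\ref{rem:pseudo}.
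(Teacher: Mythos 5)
Your proposal follows essentially the same route as the paper's proof: lift the dynamics to $\mathbf{z}=\Omega x+\Psi U$, eliminate the terminal constraint through the pseudoinverse parametrization $U=\Gamma_{U_1}U_1+\Gamma_x x+\Gamma_y y$, minimize the resulting unconstrained quadratic in $U_1$ via the first-order condition $U_1^*=-P^{-1}(A_{U_1}^T\tilde{Q}N_Q+\Gamma_{U_1}^T\tilde{R}N_R)$, and back-substitute to obtain $K_1,\dots,K_4$ and the quadratic cost. The one point you flag as needing a genuine argument --- invertibility of $Q_{xy}$ --- is also the one point the paper does not elaborate (it states only that ``it can be checked that $Q_{xy}$ is positive definite''), so your explicit positivity argument for $P$ and your candid treatment of $Q_{xy}$ are, if anything, more careful than the original.
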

\begin{remark}    
For an LTI system, these matrices are simply
\begin{align}
    \Psi &= \begin{bmatrix}
               \mathbf{0} & \mathbf{0} & \cdots&&  \mathbf{0}\\
               B & \mathbf{0} & \\
               AB & B &   \ddots && \vdots \\
               \vdots & \vdots & \ddots & \ddots\\
               A^{t_f-2}B & A^{t_f-3}B & \cdots & B & \mathbf{0}
           \end{bmatrix},~
  \Omega = \begin{bmatrix}
               I \\ A & \\ A^2 \\ \vdots \\ A^{t_f-1}
            \end{bmatrix}
\end{align}
\hfill\(\triangle\)
\end{remark}

We now prove the lemma.

\begin{proof}  An analysis in the simpler case of LTI systems with $y=0$ may be found in~\cite{feller2017relaxed}.
  By setting $U(l) = \mathrm{vec}(\{u_k\})_{k=0}^{l},~U=U(t_f)$, and $\tilde{R} = \bigoplus_{k=0}^{t_f-1}R_k$, the control cost term can be written as
  \begin{align}
   \sum_{k=0}^{t_f-1} \|u_k\|_{R_k}^2 =    U^T \tilde{R} U.\label{eq:19}
  \end{align}
Similarly, by writing $ z_k = \Phi(k,0) x + \Upsilon(k,0) U(k)$ and $\tilde{Q} = \bigoplus_{k=0}^{t_f-1}Q_k$, the state cost term can be written as
\begin{align}
&  \sum_{k=0}^{t_f-1} \|z_k-y\|_{Q_k}^2 =\\ 
&    \left(\Omega x + \Psi U - (\mathbf{1}_{t_f} \otimes I_n) y\right)^T\tilde{Q}\left(\Omega x + \Psi U - (\mathbf{1}_{t_f} \otimes I_n) y\right).\label{eq:18}
\end{align}
To eliminate the equality constraints, we can write,
\begin{align}
  y &= \Phi(t_f,0)x + 
\begin{bmatrix}
  S_1 & S_2
\end{bmatrix}
\begin{bmatrix}
  U_1\\U_2
\end{bmatrix},
\end{align}
and we can thus parameterize $U$ as follows:
\begin{align}
 U &=
  \begin{bmatrix}
    U_1 \\ U_2
  \end{bmatrix} = 
  \begin{bmatrix}
    U_1 \\ S_2^\dag \left( y - \Phi(t_f,0)x - S_1U_1 \right)
  \end{bmatrix}\label{eq:3}\\& = 
  \begin{bmatrix}
    I_{(t_f-n)m} \\ -S_2^\dag S_1
  \end{bmatrix} U_1  + 
  \begin{bmatrix}
    \mathbf{0} \\ -S_2^\dag \Phi(t_f,0)
  \end{bmatrix} x + 
  \begin{bmatrix}
    \mathbf{0} \\ S_2^\dag
  \end{bmatrix}y \\
&= \Gamma_{U_1} U_1 + \Gamma_x x + \Gamma _y y.\label{eq:20}
\end{align}
Substituting \eqref{eq:20} into \eqref{eq:19} and \eqref{eq:18} yields the total cost as
\begin{align}
  \begin{split}
 J(x,y,U_1)&= (N_Q + A_{U_1}U_1)^T \tilde{Q} (N_Q + A_{U_1}U_1)\\& + (N_R + \Gamma_{U_1}U_1)^T \tilde{R} (N_R + \Gamma_{U_1}U_1)
 \end{split}\label{eq:21}\\
  N_Q&= A_x x + A_y y\\
  N_R&= \Gamma_x x + \Gamma_y y.
\end{align}
Taking the gradient of \eqref{eq:21} with respect to $U_1$, setting it to zero, and solving for $U_1^*$ yields,
\begin{align}
 U_1^* &= -P^{-1} (A_{U_1}^T \tilde{Q} N_Q + \Gamma_{U_1}^T \tilde{R} N_R ).\label{eq:31}
\end{align}
Substituting this form of $U_1^*$ into \eqref{eq:20} yields the optimal control as in~\eqref{eq:22}, and substituting $U_1^*$ into \eqref{eq:21} yields the optimal cost as in~\eqref{eq:23}.
It can be checked that $Q_{xy}$ is positive definite.\end{proof}

\begin{remark}
\label{rem:pseudo}
Our technical assumption in the above lemma is that the LTV system is controllable, in the sense that the controllability Gramian is positive definite.
The matrix $S_2$ in \eqref{eq:3} may be zero, even in cases when the underlying system is controllable.
The pseudoinverse is well-defined in this case due to the elementary property $\mathbf{0}_{m\times n}^\dag = \mathbf{0}_{n\times m}$.
See the example in \S\protect\ref{sec:examples} for more details.
\hfill\(\triangle\)
\end{remark}

We can now state and prove the main theorem regarding the solution of Problem \eqref{eq:24}.
\begin{theorem}
  \label{thr:1}
    Consider the setting of Problem~\eqref{eq:24}, and the Kantorovich optimal transport problem,
    \begin{align}
        \begin{array}{ll}
            \min_{\pi} &  \int_{\mathbb{R}^n\times\mathbb{R}^n} C^*(x,y)d\pi(x,y)\\
          \mathrm{s.t.} & x\sim\rho_0(x)dx,~y\sim\rho_1(y)dy
        \end{array},\label{eq:25}
    \end{align}
    where $C^*(x,y)$ is given by~\eqref{eq:23}.
    Then, the optimal coupling $\pi^*$ of \eqref{eq:25} is given by
    \begin{align}
        \pi^*(x,y) = (\mathrm{Id}\times T^*)_{\#} \rho_0(x),
    \end{align}
    where $T^*_{\#}\rho_0 = \rho_1$.
    Furthermore, the control inputs optimizing Problem~\eqref{eq:24} is given by,
    \begin{align}
        U& =  K^*(T^*(x) - \Phi(t_f,0)x)\\& - \Gamma_{U_1}P^{-1}A_{U_1}^T\tilde{Q}\left(\Omega x - (\mathbf{1}_{t_f}\otimes I_n)T^*(x)\right),\label{eq:26}
    \end{align}
    where the relevant matrices are defined in Lemma~\ref{lem:1}.
\end{theorem}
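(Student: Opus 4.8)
The plan is to prove the theorem in two stages. First, I would show that the Kantorovich problem \eqref{eq:25} with the quadratic cost $C^*(x,y)$ from \eqref{eq:23} is solved by a coupling concentrated on the graph of a map $T^*$. Second, I would invoke the reduction in Remark~\ref{rem:1}, so that substituting $y = T^*(x)$ into the deterministic optimal control \eqref{eq:22} of Lemma~\ref{lem:1} yields the claimed control law \eqref{eq:26}.

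For the first stage I would exploit the explicit structure $C^*(x,y) = x^TQ_x x + y^TQ_y y + 2x^TQ_{xy}y$. Under the marginal constraints $x\sim\rho_0$ and $y\sim\rho_1$, the two pure-quadratic terms integrate to the constants $\int x^TQ_x x\, d\rho_0$ and $\int y^TQ_y y\, d\rho_1$, which are independent of the coupling $\pi$ (and finite provided $\rho_0,\rho_1$ have finite second moments, which I would assume). Hence minimizing $\int C^*\,d\pi$ is equivalent to minimizing the bilinear term $\int 2x^TQ_{xy}y\,d\pi$. Since $Q_{xy}$ is invertible by Lemma~\ref{lem:1}, the linear change of variables $w = Q_{xy}y$ is a bijection that pushes $\rho_1$ to a density $\sigma_1$ and turns the cost into $x^Tw$; a reflection $v = -w$ then reduces it, up to further marginal constants, to the standard squared-Euclidean cost $\tfrac12\|x-v\|_2^2$ between $\rho_0$ and the reflected density $\sigma_1^-$.

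At this point I would invoke Brenier's theorem (as cited via \cite{Villani2003,brenierfactorization}): because $\rho_0$ is absolutely continuous, the optimal coupling of this standard problem is unique and concentrated on the graph of a map $\hat T = \nabla\varphi$ for a convex potential $\varphi$. Undoing the two linear bijections gives $T^*(x) = -Q_{xy}^{-1}\hat T(x)$, a Monge map whose induced coupling $\pi^* = (\mathrm{Id}\times T^*)_{\#}\rho_0$ is optimal for \eqref{eq:25}, and the marginal constraint forces $T^*_{\#}\rho_0 = \rho_1$ as claimed. The second stage then closes the loop exactly as in Remark~\ref{rem:1}: $C^*(x,y)$ is by construction the optimal deterministic cost-to-go from $x$ to $y$, so for every realized pair $(x,T^*(x))$ the control \eqref{eq:22} is optimal, and since $\pi^*$ optimizes the transport of that cost, substituting $y = T^*(x)$ into \eqref{eq:22} produces a control optimal for \eqref{eq:24}, which is precisely \eqref{eq:26}.

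I expect the main obstacle to be the first stage, namely rigorously justifying that the general quadratic cost reduces to a case covered by Brenier's theorem. The sign of $Q_{xy}$ is not fixed in general, so the reflection step (or, equivalently, appealing directly to the existence of optimal maps for concave quadratic costs) must be handled with care, and one must confirm that each linear change of variable preserves the absolute continuity of the source measure so that the hypotheses of Brenier's theorem remain satisfied throughout the reduction.
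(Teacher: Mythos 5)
Your argument is correct, but it takes a genuinely different route from the paper on the key step. The paper's proof is essentially a citation: it invokes Theorem~2.2 of \cite{Hindawi2011} as a black box to assert that the optimal coupling for the LQ cost \eqref{eq:23} exists and is concentrated on the graph of a map of the form $T^*(x) = -\tfrac{1}{2}Q_{xy}^{-1}\nabla\varphi(x)$ with $\varphi$ convex, and then appeals to Lemma~\ref{lem:1} for the control \eqref{eq:26}. You instead prove that existence result directly in the case at hand: you strip off the pure-quadratic terms $x^TQ_xx$ and $y^TQ_yy$ (which are constants under the marginal constraints, assuming finite second moments), apply the linear bijection $y \mapsto -Q_{xy}y$ to convert the remaining bilinear term into the standard cost $\tfrac12\|x-v\|_2^2$ up to further marginal constants, and then invoke Brenier's theorem for absolutely continuous $\rho_0$. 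Undoing the change of variables recovers exactly the form $T^*(x) = -Q_{xy}^{-1}\nabla\varphi(x)$ cited by the paper (the factor $\tfrac12$ is absorbed into $\varphi$). What your approach buys is self-containedness and transparency about the hypotheses actually needed (absolute continuity of $\rho_0$, finite second moments, invertibility of $Q_{xy}$), which the paper leaves implicit in the citation; what the citation buys is coverage of the general LQ setting of \cite{Hindawi2011} without having to verify the reduction. One small note: your flagged worry about the sign or definiteness of $Q_{xy}$ is not actually an obstacle --- the map $y\mapsto -Q_{xy}y$ is a linear bijection for any invertible $Q_{xy}$, preserves absolute continuity of the target measure (which Brenier's theorem does not require anyway; only the source must be absolutely continuous), and the reflection is precisely what converts the bilinear term into a squared-distance minimization, so no concave-cost version of Brenier is needed. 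The second stage of your argument, passing from the coupling to the control via Remark~\ref{rem:1} and \eqref{eq:22}, matches the paper's.
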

\begin{proof}
    We can see that the Kantorovich problem~\eqref{eq:25} corresponds to Problem~\eqref{eq:24}.
    By Theorem~2.2 in~\cite{Hindawi2011}, the solution to Problem~\eqref{eq:25} under the cost $C^*(x,y)$ from~\eqref{eq:23} exists, and is of the form
    \begin{align}
        \pi^*(x,y) &= (\mathrm{Id}\times T^*)_{\#} \rho_0(x),
    \end{align}
    with
    \begin{align}
      T^*(x) &= -\frac{1}{2} Q_{xy}^{-1} \nabla \varphi(x),
    \end{align}
    where $\varphi(x)$ is a convex function.
    From the calculation in Lemma~\ref{lem:1}, the optimal control in~\eqref{eq:26} follows.
\end{proof}

\begin{remark}
    The solution to Problem~\eqref{eq:25} yields a Monge map $T^*$ that transports $x\sim\rho_0(x)dx$ to $y:=T^*(x) \sim \rho_1(y)dy$, minimizing the expected cost-to-go from $x$ to $y$.
    Another interpretation of this map is that it pairs initial and terminal states $(x,y)$ in such a manner that it minimizes the LQR cost averaged over the distribution of initial states.
    We exploit this interpretation in \S\ref{sec:examples}, where we discuss an application to swarm deployment.
\hfill\(\triangle\)
\end{remark}

First, we examine the numerical computation of $T^*(x)$.

\subsection{Numerical Computation of the Monge Map}

In general, the Monge map $T(x)$ is difficult to compute numerically~\cite{Peyre2011,Peyre2019}.
In fact, the optimal control formulation~\eqref{eq:11} was devised by Brenier and Benamou precisely to numerically compute $T(x)$, and devising fast solvers for this problem is an area of active research~\cite{Papadakis2014}. 
In one dimension, a classical result (used in~\cite{Chen2017}) determines the Monge map in terms of the cumulative distribution functions of the initial and terminal densities as
\begin{align}
    \int_{-\infty}^x \rho_0(x)dx = \int_{-\infty}^{T(x)}\rho_1(y)dy.
\end{align}
This can readily be solved to high precision with a bisection algorithm.

For systems with $n>1$ states, the situation is more complicated.
For example, in the single-integrator system $x_{k+1} = x_k + u_k$, the one-timestep Monge map exists explicitly when the initial and termimal distributions are Gaussian.
Suppose $\rho_0,\rho_1$ are,
$
    \rho_0(x) \sim \mathcal{N}(m_0,\Sigma_0),$ $\rho_1(x)\sim\mathcal{N}(m_1,\Sigma_1).
$
Then, the optimal Monge map is a shift and scaling~\cite{Knott1984},
$
  T(x) = \tilde{A}(x-m_0) + m_1,
$
with
$
  \tilde{A} = \Sigma_0^{-1/2} \left( \Sigma_0^{1/2} \Sigma_1 \Sigma_0^{1/2}\right)^{1/2} \Sigma_0^{-1/2}.
$

For general distributions, we outline a discretization-based method for computing $\pi^*$ from~\eqref{eq:9}, and then generating the image of $T^*(x)$ from this approximate $\pi^*$.
Suppose we discretize $\mathcal{X} = \mathcal{Y} := \mathbb{R}^n$ into cells $\{X_i\}_{i=1}^{n_x}$, $\{Y_j\}_{j=1}^{n_y}$, and then define probability mass vectors $\bmrho_0\in\mathbb{R}^{n_x}$, $\bmrho_1\in\mathbb{R}^{n_y}$ representing $\rho_0$, $\rho_1$, as
\begin{align}
    \bmrho_i = \int_{X_i} \rho_0(x)dx,~\bmrho_j = \int_{Y_j} \rho_1(y)dy.
\end{align}
The cost in \eqref{eq:9} can be written over this discrete space as
\begin{align}
  \int_{\mathbb{R}^n \times \mathbb{R}^n} C(x,y) d\pi(x,y) \longrightarrow
  \sum_{i,j} C(x_i,y_j) \bmpi(x_i,y_j),
\end{align}
where $x_i,y_i$ are representative coordinates of the cell, say their centroids.
The marginal constraints can be imposed on $\bmpi$ as
\begin{align}
    \bmpi \mathbf{1}_{n_y} = \bmrho_0,~\bmpi^T \mathbf{1}_{n_x} = \bmrho_1. 
\end{align}
Letting $C(x_i,y_j) := C_{ij}$, and $\bmpi(x_i,y_j):=\bmpi_{ij}$, we arrive at the linear program
\begin{align}
    \begin{array}{ll}
        \min_{\bmpi} & \sum_{i,j} C_{ij} \bmpi_{ij}\\
      \mathrm{s.t.} & \bmpi \mathbf{1}_{n_y} = \bmrho_0,~\bmpi^T \mathbf{1}_{n_x} = \bmrho_1\\
                     &\bmpi_{ij} \geq 0,~\forall i,j.
    \end{array}\label{eq:27}
\end{align}
To recover a discrete image of the map $T$, one has to numerically `un-do' the pushforward operation that $\bmpi$ represents.
This is done by the element-wise division of $\bmpi(x_i,y_j)$ by $\bmrho_0(x_i)$: 
\begin{align}
    \bmtau(x_i,y_j) := \dfrac{\bmpi(x_i,y_j)}{\bmrho_0(x_i)}.
\end{align}
Note that this definition requires that $\bmrho_0$ must be strictly positive over the discrete domain; alternately  if $\bmrho_0(x_i)=0$, then the corresponding row of $\bmpi$ must also be 0 from the constraints in~\eqref{eq:27}. 
In this case, we can define $\bmtau(x_i,y_j)$ arbitrarily, since if there is no mass to move from $x_i$, it is irrelevant where that mass should move to.
Note that Problem~\eqref{eq:27} suffers from the `curse of dimensionality' due to the discretization of $\mathbb{R}^n$.
Fast approximations of optimal transport are an ongoing area of research, and in the near future one may expect that Problem~\eqref{eq:27}, or approximations of it, may be computationally tractable for large state-spaces~\cite{weedarxiv,Cuturi2013,weedlight}.

The graph of $T$ over $\{x_i\}_{i=1}^k$ can then be determined by applying the map $\bmtau$ to the domain $\{x_i\}_{i=1}^k$.
Suppose $[X_1,\dots,X_n]$ are appropriately-vectorized coordinates in each of the $n$ directions of the discretized domain in $\mathbb{R}^n$.
Then, the matrix $\bmtau$ generates the image of $T$ as follows:
\begin{align}
    \bmtau [X_1,\dots,X_n] = [T_1,\dots,T_n],
\end{align}
where $T_i$ is the vectorized map over the domain in the $i$th direction of $\mathbb{R}^n$.

\section{Examples}
\label{sec:examples}

\begin{figure}
    \centering
    \includegraphics[width=\columnwidth]{./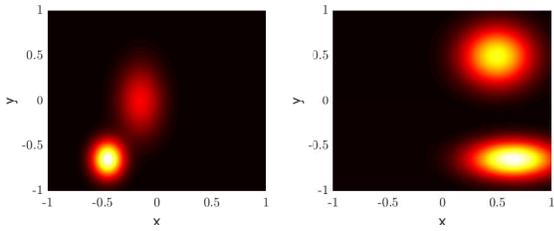}
    \caption{True distribitions of the initial (left: $\rho_0(x)$) and target (right: $\rho_1(x)$) states.}
    \label{fig:eg1dist}
\end{figure}

In this section, we provide numerical experiments of the results in \S\ref{sec:results}.
Code (and parameters) for the examples can be accessed at~\cite{2021CDCCode}.
The runtimes for the computation of the optimal transport maps are 12s and 9s, for examples \ref{sec:2d-lqr-example} and \ref{sec:swarm-deployment-2d} respectively, on an Intel Core i7-9700K CPU (3.60GHz).

\subsection{2D LQR Example on LTI System}
\label{sec:2d-lqr-example}
First, we provide an example of the numerical implementation of Theorem~\ref{thr:1}.
We implemented the optimal transport method for LQR on a 2-state, 1-input system with matrices
\begin{align}
    A = \begin{bmatrix}
   0.9& -0.1 \\
         -0.1 & 0.8          
        \end{bmatrix},~ B = 
  \begin{bmatrix}
      1 \\ 0
  \end{bmatrix}\\
  Q_k = I_2,~R_k=1,~0\leq k \leq 10.
\end{align}
Let the states be denoted by $z_k:=(z_k^{(1)},z_k^{(2)})$.
Our initial states were distributed according to $\rho_0(x)$ depicted in Fig.~\ref{fig:eg1dist}, and we sought to steer the system to the distribution $\rho_1(y)$, also depicted in Fig.~\ref{fig:eg1dist}.

The distributions $\rho_0,\rho_1$ are supported on a discrete grid on the cube $[-1,1]^2$ with a discretization length $\Delta x = \Delta y = 0.0588$.
The optimal transport map $T^*$ was computed by solving \eqref{eq:27} discretized on this grid, with a cost matrix $C$ computed using \eqref{eq:23} from Lemma~\ref{lem:1}.
In Fig.~\ref{fig:eg1map}, we show a color map of the image of $T^*$.
On the left is the coordinate in the first dimension as a function of the initial $(x^{(1)},x^{(2)}) = (z_{0}^{(1)},z_0^{(2)})$, and on the right is the coordinate in the second dimension as a function of the initial $(z_{0}^{(1)},z_0^{(2)})$.
One can note that the numerical approximation contains outliers in regions where $\bmrho_0$ has little mass.

We ran an experiment with 10,000 i.i.d.~random initial conditions sampled from $\rho_0$.
For each initial condition $z_0$, the optimal map computed the corresponding final condition as $z_{t_f} = T^*(z_0)$.
The simulation then used the optimal control inputs~\eqref{eq:26} to guide the system to $T^*(z_0)$ over a time horizon of $0\leq k \leq 10$.
Plots of the empirical distributions are shown in Fig.~\ref{fig:eg1emp} at times $t=0,3,7,10$.

\begin{figure}
    \centering
    \includegraphics[width=\columnwidth]{./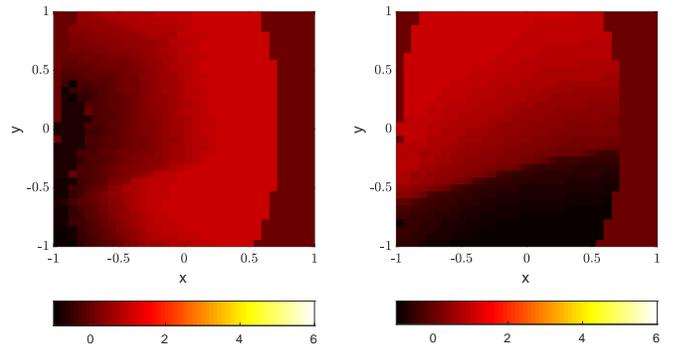}
    \caption{Images of the optimal map $T^*(x)$ in the $y^{(1)}$ (left) and $y^{(2)}$ (right) coordinates of the target domain.}
    \label{fig:eg1map}
\end{figure}

\begin{figure}
    \centering
    \includegraphics[width=\columnwidth]{./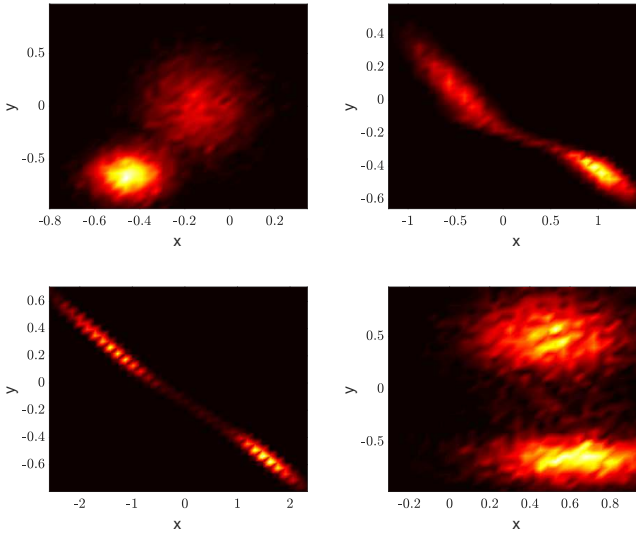}
    \caption{Empirical distributions of the states of the system over time. Top left: $t=0$. Top right: $t=3$. Bottom left: $t=7$. Bottom right: $t=10$.}
    \label{fig:eg1emp}
\end{figure}

\subsection{Swarm Deployment - 2D LQR on an LTV System}
\label{sec:swarm-deployment-2d}
Consider the task of assigning target positions to $n$ agents whose initial states have an empirical distribution approximating $\rho_0(x)dx$, but not randomly instantiated.
For example, consider $n$ agents spaced at constant intervals in the cube $[-1,1]^2$, as depicted in the middle-left subfigure of Fig.~\ref{fig:eg2dist}.
Clearly, this is an approximation of a uniform distribution.
Our target distribution is the logo of the Swiss Federal Institute of Technology, Z\"{u}rich, discretized over a $35\times 35$ pixel domain.
A target application could be a swarm of UAVs providing a background performance act during a university event.

Using LTV discrete single-integrator dynamics
\begin{align}
\begin{split}
  &A_k = Q_k = R_k= I_2,~0\leq k \leq 10,\\
  &B_k = I_2,~0\leq k \leq 5,~B_k = \mathbf{0}_{2\times 2},~5<k\leq 10,
\end{split}\label{eq:29}
\end{align}
we compute the optimal map using~\eqref{eq:27} with the cost matrix~\eqref{eq:23} from Lemma~\ref{lem:1}, depicted in Figure~\ref{fig:eg2map}.
The simulation was again produced over a time horizon of $0\leq k \leq 10$.
This time, we plot the explicit mapping between points in a grid and their target states as generated by the map $T^*$, as shown in the bottom-right of Figure~\ref{fig:eg2map}.

The dynamics~\eqref{eq:29} are controllable, in the sense that the controllability Gramian~\eqref{eq:28} $W_c(t_f,0)$ is positive-definite, however the matrix $S_2$ in \eqref{eq:3} is $\mathbf{0}_{4\times2}$.
Since $\mathbf{0}_{4\times 2}^\dag = \mathbf{0}_{2\times 4}$, by \eqref{eq:3}, this simply means that the control is zero for $6\leq k \leq 10$.
As the system is controllable, it is steered to the final position by timestep $k=5$, as evident in Figure~\ref{fig:lqr}.

\begin{figure}
  \centering
  \includegraphics[width=\columnwidth]{./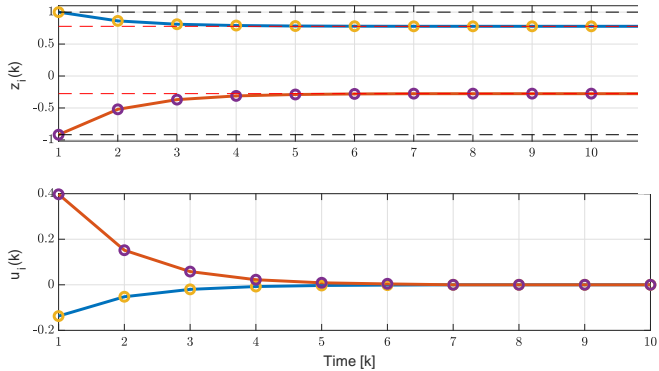}
  \caption{Top: Trajectory of dynamics \protect\eqref{eq:29}. Bottom: Control computed by solving \protect\eqref{eq:24} using \texttt{cvx}\protect\cite{cvx} (solid line) and via \protect\eqref{eq:22} (markers). Red (black) dashed lines indicate target (initial) states.}
  \label{fig:lqr}
\end{figure}

\begin{figure}
    \centering
    \includegraphics[width=\columnwidth]{./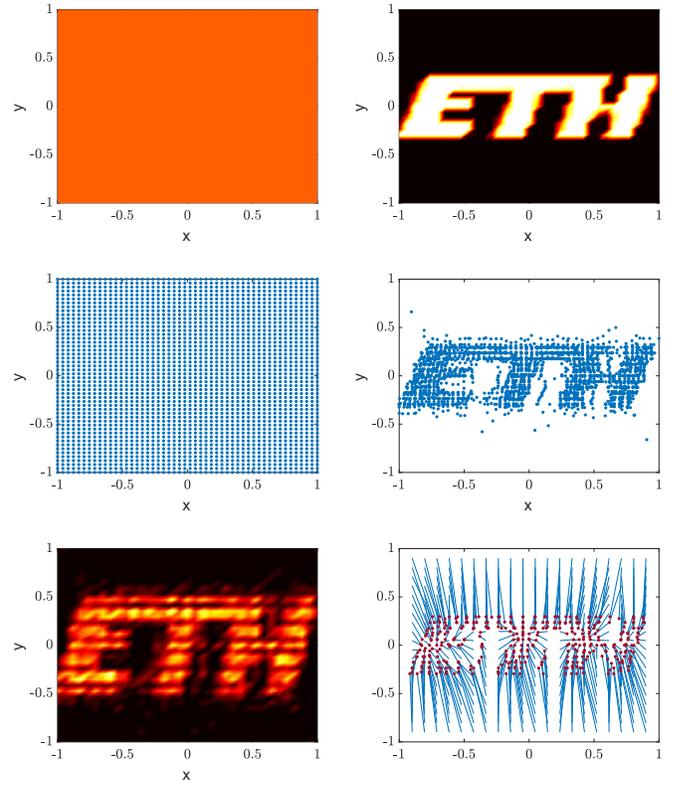}
    \caption{Top: plots of the initial uniform distribution (left), and target distribution representing the ETH logo (right). 
Middle: Initial conditions $(x,y)$ uniformly spaced in $[-1,1]^2$ (left), and their corresponding terminal conditions $T^*(x,y)$ (right).
Bottom: empirical distribution of the terminal states from the middle-right, and a plot showing the corresponding final states interpolated from an initial state.}
    \label{fig:eg2dist}
\end{figure}

\begin{figure}
    \centering
    \includegraphics[width=\columnwidth]{./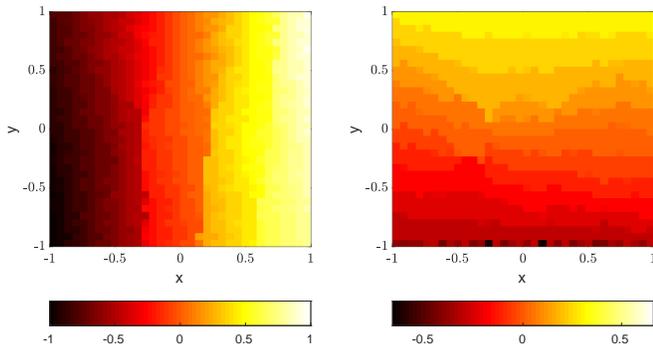}
    \caption{Images of the optimal map $T^*(x)$ in the $x$ (left) and $y$ (right) coordinates of the target domain. }
    \label{fig:eg2map}
\end{figure}


\section{Conclusion}
\label{sec:conclusion}

In this paper, we studied the discrete-time linear-quadratic regulator with uncertainties in the initial state, and how optimal transport can be used to guide the system to a final state with an uncertainty specified by a target probability density.
We derived the form of the optimal control from the optimal transport map, and discussed numerical implementations of this map.
Finally, we provided numerical examples with an application to swarm deployment.

Future work may include online computation of the optimal transport map corresponding to the LQ cost, and studying systems where additional uncertainty comes from being driven by noise of arbitrary distributions.




\end{document}